\newtheorem{theorem}{Theorem}
\newtheorem*{theorem*}{Theorem}
\newtheorem{lemma}{Lemma}
\newtheorem{proposition}{Proposition}
\newtheorem{corollary}{Corollary}
\theoremstyle{definition}
\newtheorem{remark}{Remark}
\begin{document}

\title[Existence of non-cyclic abelian varieties]{Existence of simple non-cyclic abelian varieties over arbitrary finite fields and of a given dimension $g>1$}
\author{Alejandro J. Giangreco Maidana}
\date{July 9, 2025}


\keywords{abelian variety, finite field, cyclic, group of rational points, totally real algebraic integer}
\subjclass{Primary 11G10, 14G15, 14K15}

\begin{abstract}
Vl{\u a}du{\c t} characterized in 1999 the set of finite fields $k$ such that all elliptic curves defined over $k$ have a cyclic group of rational points. Under the conjecture of infinitely many Mersenne primes, this set is infinite. In these notes we prove that there is no a finite field $k$ such that all the simple abelian varieties defined over $k$ of dimension $g>1$ have a cyclic group of rational points.
\end{abstract}

\maketitle

\section{Introduction}
Abelian varieties over finite fields whose group of rational points is cyclic (called \emph{cyclic} varieties for short) play an important role in both theory and applications. For example, they are used in cryptography where the discrete logarithm problem is exploited. The statistics on cyclic varieties are related to a positive characteristic analogue of the Cohen-Lenstra heuristics (\cite{CohenLenstra1984}), which, roughly speaking, states that the odd part of random abelian groups tend to be cyclic. The concept of cyclicity first emerged in discussions about the conjectures of Lang and Trotter (\cite{lang1977}) and in Serre's ``Résumé des cours 1977-78'' (\cite{Serre1986Resume}):  given an elliptic curve defined over the rational numbers, we are interested in the set of primes such that the reduction is a cyclic elliptic curve. This question was also studied by Gupta, Murty and others. Generalizations to higher dimensions were also done.

In \cite[Theorem 4.1]{VLADUT199913}, Vl{\u a}du{\c t} characterized the set $\mathcal{C}_1$ of prime powers $q$ for which the elliptic curves defined over $\mathbb{F}_q$ all have a cyclic group of rational points. It is not known whether the set $\mathcal{C}_1$ is infinite or not. However, we have
\[
\{q : q-1 \text{ is a Mersenne prime}\} \subset \mathcal{C}_1,
\]
so $\mathcal{C}_1$ is conjecturally infinite.

In higher dimension there is no such field. Indeed, it is enough to consider the product of an elliptic curve by itself. It is natural though to pose the same question if we restrict ourselves to simple abelian varieties.
In these short notes, we prove that in dimension $g > 1$, there is no such a field. 

\begin{theorem}\label{thm:nonexistence}
Let $g>1$. For every finite field $\mathbb{F}_q$ there is a simple and ordinary $g$-dimensional abelian variety defined over $\mathbb{F}_q$ with a non-cyclic $3$-primary component of its group of rational points, except in cases $(g,\mathbb{F}_q) \in \{(2,\mathbb{F}_2), (2,\mathbb{F}_3), (3,\mathbb{F}_2)\}$. 
In the exceptional cases, there is a $g$-dimensional simple abelian variety defined over $\mathbb{F}_q$ with a non-cyclic $2$-primary component of its group of rational points but all the $\ell$-primary components are cyclic for every prime $\ell>2$.
\end{theorem}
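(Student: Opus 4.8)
The plan is to reduce the statement to a question about totally real algebraic integers, via Honda--Tate theory and Deligne's equivalence for ordinary abelian varieties. Write $q=p^{n}$. Isogeny classes of simple ordinary $g$-dimensional abelian varieties over $\mathbb F_q$ correspond bijectively to monic irreducible polynomials $h\in\mathbb Z[x]$ of degree $g$ all of whose roots lie in $(-2\sqrt q,2\sqrt q)$ and with $p\nmid h(0)$ (the ``real Weil polynomials''): if $\beta$ is a root of such an $h$, then $K^{+}=\mathbb Q(\beta)$ is totally real of degree $g$, the Weil number is $\pi=\tfrac12\bigl(\beta+\sqrt{\beta^{2}-4q}\bigr)$, the CM field is $K=K^{+}(\pi)$ of degree $2g$ (note $\beta^{2}-4q$ is totally negative, so $\pi\notin K^{+}$ and the degree-$2g$ polynomial $t^{g}h(t+q/t)$ is irreducible), and $\#A(\mathbb F_q)=N_{K/\mathbb Q}(\pi-1)=N_{K^{+}/\mathbb Q}(q+1-\beta)$. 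By Deligne's equivalence I may choose, inside this isogeny class, the variety $A$ with $\operatorname{End}(A)=\mathcal O_{K}$, and then $A(\mathbb F_q)\cong\mathcal O_{K}/(\pi-1)\mathcal O_{K}$ as abelian groups.

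Next I would establish the following cyclicity mechanism: if $3$ is unramified in $K^{+}$ and $3\mid(q+1-\beta)$ in $\mathcal O_{K^{+}}$, then $(\mathbb Z/3)^{g}\hookrightarrow A(\mathbb F_q)$, so (since $g>1$) the $3$-primary component of $A(\mathbb F_q)$ is non-cyclic. This follows by considering the natural map $\mathcal O_{K^{+}}\hookrightarrow\mathcal O_{K}\to\mathcal O_{K}/(\pi-1)\mathcal O_{K}=A(\mathbb F_q)$: if $x\in\mathcal O_{K^{+}}$ lies in $(\pi-1)\mathcal O_{K}$ then, applying complex conjugation and using $(\pi-1)(\bar\pi-1)=q+1-\beta$, one gets $x^{2}\in(q+1-\beta)\mathcal O_{K^{+}}\subseteq 3\mathcal O_{K^{+}}$, hence $x\in3\mathcal O_{K^{+}}$ (as $3$ is unramified); thus $A(\mathbb F_q)$ contains a quotient of $\mathcal O_{K^{+}}$ surjecting onto $\mathcal O_{K^{+}}/3\mathcal O_{K^{+}}\cong(\mathbb Z/3)^{g}$, and so contains $(\mathbb Z/3)^{g}$ itself. (The resulting rank $g$, and not $2g$, is exactly what is permitted by the Weil pairing, the rational $3$-torsion being a maximal isotropic subgroup; this is also why the construction cannot work for $g=1$, recovering Vl{\u a}du{\c t}'s picture.)

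The heart of the proof is then to realise this. Pick $c\in\{-1,0,1\}$ with $c\equiv q+1\pmod 3$, and seek $\beta=c+3\delta$ with $\delta$ a totally real algebraic integer of degree $g$ such that $3$ is unramified in $\mathbb Q(\delta)$; then $3\mid(q+1-\beta)$ holds automatically, and (reducing $\pi^{2}-\beta\pi+q$ modulo $p$) the ordinariness condition $p\nmid h(0)$ becomes $p\nmid N_{K^{+}/\mathbb Q}(\beta)$, which is automatic when $p=3$ and otherwise is ensured by excluding finitely many residues of $\delta$. It then remains to produce such a $\delta$ whose conjugates all lie in the interval $\bigl(\tfrac{-2\sqrt q-c}{3},\tfrac{2\sqrt q-c}{3}\bigr)$, which has length $\tfrac{4\sqrt q}{3}$: once this length exceeds $4$ one can invoke the existence of totally real algebraic integers of prescribed degree with all conjugates in a prescribed (long enough) interval, and in the remaining finitely many cases of small $q$ one exhibits explicit polynomials. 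This is also where the exceptions emerge: for $q$ small enough that no degree-$g$ totally real integer has all conjugates in such a short interval, the mechanism is unavailable — and a direct inspection shows this genuine obstruction occurs exactly for $(g,\mathbb F_q)\in\{(2,\mathbb F_2),(2,\mathbb F_3),(3,\mathbb F_2)\}$.

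Finally, for these three pairs I would (i) enumerate the finitely many real Weil polynomials and check directly that no simple abelian variety of that dimension over that field has a non-cyclic odd-primary component, and (ii) write down in each case a simple abelian variety whose $2$-primary component is non-cyclic while all odd-primary components are cyclic — for instance, over $\mathbb F_2$ in dimension $2$, the isogeny class with $f(t)=t^{4}-t^{2}+4$, for which $A(\mathbb F_2)\cong(\mathbb Z/2)^{2}$, together with analogous small-norm examples for $(2,\mathbb F_3)$ and $(3,\mathbb F_2)$. The step I expect to be the genuine obstacle is the third one: making the archimedean constraint (conjugates of $\delta$ confined to a short interval), the ring-theoretic constraints (degree exactly $g$, and $3$ unramified), and the $p$-adic constraint (ordinariness) hold simultaneously and uniformly in $(g,q)$, and isolating precisely which small pairs escape.
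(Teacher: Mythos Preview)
Your approach differs substantively from the paper's: you use Deligne's equivalence to realise $A(\mathbb F_q)\cong\mathcal O_K/(\pi-1)$ for the variety with maximal endomorphism ring and inject $(\mathbb Z/3)^g$ directly, whereas the paper uses the Weil-polynomial criterion that an isogeny class with polynomial $f$ is non-$3$-cyclic as soon as $3\mid f'(1)$ and $9\mid f(1)$, and translates this into the conditions $3\mid h'(n)$ and $9\mid h(n)$ on the real Weil polynomial $h$ (with $n\in\{0,1,2\}$ according to $q\bmod 3$). Your hypothesis ``$3\mid(q+1-\beta)$ in $\mathcal O_{K^+}$'' forces $h(x)\equiv(x-c)^g\pmod 3$, while the paper only needs $(x-c)^2\mid h(x)\pmod 3$ together with one mod-$9$ congruence. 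This overshooting is precisely what breaks your plan.

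The concrete gap is the following. Writing $\beta=c+3\delta$ confines the conjugates of $\delta$ to an interval of length $\tfrac{4\sqrt q}{3}$. For $q\le 8$ this length is strictly less than $4$, so the interval has logarithmic capacity $<1$, and by Fekete's theorem it contains only finitely many algebraic integers (of any degree) with all conjugates inside. For $q=2$ the interval even sits in $(-1,1)$, whose only algebraic integer is $0$; so for $q=2$ your substitution produces no $\delta$ of degree $\ge 2$ at all. Hence what you describe as ``finitely many cases of small $q$'' is in fact infinitely many pairs $(g,q)$ --- every $g\ge 2$ for each $q\in\{2,3,4,5,7,8\}$ --- that your mechanism cannot reach, and in particular it cannot separate $(4,\mathbb F_2)$ (not exceptional) from $(2,\mathbb F_2)$ (exceptional). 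The paper avoids this by working with $\beta$ itself in $(-2\sqrt q,2\sqrt q)$, an interval of length $\ge 4\sqrt 2>4$, and imposing only the two coefficient congruences $9\mid h(n)$, $3\mid h'(n)$; it then exhibits explicit $h$ for $2\le g\le 13$ and, for $g\ge 14$, perturbs a Chebyshev-type polynomial so that a few low-order coefficients meet those congruences while remaining irreducible (via reduction mod $2$) and having all roots in $(-2\sqrt 2,2\sqrt 2)$.
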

The following is a direct consequence.
\begin{corollary}
Let $g>1$. There is no finite field $\mathbb{F}_q$ such that all the simple $g$-dimensional abelian varieties defined over $\mathbb{F}_q$ have a cyclic group of rational points.    
\end{corollary}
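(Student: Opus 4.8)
\emph{Proof proposal.} The plan is to combine Honda--Tate theory with an explicit construction of Weil polynomials. By Deligne's equivalence for ordinary abelian varieties (equivalently, Honda--Tate together with Waterhouse's description of ordinary isogeny classes), a simple ordinary $g$-dimensional abelian variety over $\mathbb{F}_q$ with Frobenius $\pi$ amounts to a $q$-Weil polynomial of the form $P(x)=x^{g}h(x+q/x)$, where $h\in\mathbb{Z}[x]$ is monic, irreducible, totally real of degree $g$, has all its roots in the open interval $(-2\sqrt q,2\sqrt q)$, and satisfies $p\nmid h(0)$ (the ordinariness condition, since the middle coefficient of $P$ is $\equiv h(0)\bmod p$; irreducibility of $h$ then forces $P$ irreducible, hence the variety simple). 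Moreover, for an abelian variety $A$ in this isogeny class with $\operatorname{End}(A)=\mathcal O$ one has $A(\mathbb{F}_q)\cong \mathcal O/(\pi-1)\mathcal O$ as abelian groups when $\mathcal O$ is Gorenstein (in general $A(\mathbb{F}_q)\cong I/(\pi-1)I$ for a proper $\mathcal O$-ideal $I$), and $\mathcal O$ ranges over all orders with $\mathbb{Z}[\pi,\bar\pi]\subseteq\mathcal O\subseteq\mathcal O_K$, $K=\mathbb{Q}(\pi)$. So the task reduces to producing, for each $(g,\mathbb{F}_q)$ outside the three listed pairs, a polynomial $h$ as above whose isogeny class contains an $A$ with non-cyclic $3$-primary component; the three exceptional pairs are handled directly, exhibiting an $A$ with non-cyclic $2$-part and checking --- a finite verification, since there are only finitely many isogeny classes in each of these cases --- that every $\ell$-part for odd $\ell$ is cyclic.

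\emph{The non-cyclicity criterion.} A direct computation with the relation $\pi^{2}-\beta\pi+q=0$ over $\mathbb{Z}[\beta]$, where $\beta=\pi+\bar\pi$, gives $\mathbb{Z}[\pi,\bar\pi]/(\pi-1)\mathbb{Z}[\pi,\bar\pi]\cong\mathbb{Z}[\beta]/((q+1)-\beta)\mathbb{Z}[\beta]\cong\mathbb{Z}/h(q+1)\mathbb{Z}$; thus the member of the isogeny class with endomorphism ring $\mathbb{Z}[\pi,\bar\pi]$ has \emph{cyclic} group of points, and non-cyclicity must be sought among the other members. Localizing at $3$: if $\mathbb{Z}[\pi,\bar\pi]$ is maximal at $3$, then $T_{3}$ is a rank-one module over the product of DVRs $\prod_{\mathfrak p\mid 3}\mathcal O_{K,\mathfrak p}$, hence free, so every $A$ in the class has the same $3$-part, namely $\bigoplus_{\mathfrak p\mid 3}\mathcal O_{K,\mathfrak p}/(\pi-1)$, which is non-cyclic exactly when either two primes above $3$ divide $\pi-1$, or some prime $\mathfrak p\mid 3$ with $\min(v_{\mathfrak p}(\pi-1),e_{\mathfrak p})f_{\mathfrak p}\ge 2$ divides $\pi-1$. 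Unwinding this, a workable sufficient condition is: $9\mid h(q+1)$, the factor $x-(q+1)$ occurs in $h\bmod 3$ with multiplicity $\ge 2$, and $3$ divides the index $[\mathcal O_K:\mathbb{Z}[\pi,\bar\pi]]$ (so that a non-maximal order, i.e.\ a suitable member of the class, furnishes the needed second generator of the $3$-Sylow) --- one builds $h$ so that all of this holds.

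\emph{The construction.} For fixed $g\ge2$ I would fix a prime $\ell\notin\{3,p\}$ and choose $h$ by the Chinese Remainder Theorem so that: its reduction mod $\ell$ is an Eisenstein-at-$\ell$ pattern (forcing irreducibility over $\mathbb{Q}$); its reduction mod $3$ equals $(x-(q+1))^{2}u(x)$ with $u(q+1)\neq0$ and $u(0)\neq0$; one has $h(q+1)\equiv0\pmod 9$; and $p\nmid h(0)$. These congruences are pairwise compatible, so they leave enough free coefficients that the roots of $h$ can be placed in $(-2\sqrt q,2\sqrt q)$ by an interlacing argument, \emph{provided $q$ is not too small relative to $g$} --- this is precisely where the pairs $(2,\mathbb{F}_2),(2,\mathbb{F}_3),(3,\mathbb{F}_2)$ are forced out. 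Since $q+1\bmod 3$ depends on $q$, the explicit recipe splits into the cases $q\equiv0,1,2\pmod3$; when $q\equiv0$, i.e.\ $p=3$, the $\ell$-adic argument above must be replaced by the analogous statement for the \'etale part of the $p$-divisible group of the ordinary variety, the criterion becoming a congruence mod $3$ on the unit-root Frobenius.

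\emph{Main obstacle.} The genuine work is making one explicit recipe succeed \emph{uniformly} in $q$ and $g$: for small $q$ the Weil interval $(-2\sqrt q,2\sqrt q)$ is short, so there is very little room to simultaneously enforce irreducibility, keep all roots inside the interval, impose the mod-$9$ congruence, and keep $h(0)$ prime to $p$; controlling this tension is what pins down the three exceptions and dictates the case analysis. A second, more technical, point is to verify that the isogeny class produced really contains a member with non-cyclic $3$-primary component --- not merely a member whose order is divisible by $9$ --- which needs the index/conductor bookkeeping sketched above; and the residue-characteristic case $p=3$ must be treated with the Dieudonn\'e (Serre--Tate) description rather than with the prime-to-$p$ Tate module.
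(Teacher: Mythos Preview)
Your approach parallels the paper's in outline---reduce to finding an irreducible, totally real, ordinary $h$ with suitable congruences---but diverges at two key points, and the first of these is a genuine gap.

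\textbf{The non-cyclicity criterion.} You correctly compute $\mathbb{Z}[\pi,\bar\pi]/(\pi-1)\cong\mathbb{Z}/h(q+1)\mathbb{Z}$, but then your ``workable sufficient condition'' is not actually proved to be sufficient: you assert that $9\mid h(q+1)$, $(x-(q+1))^2\mid h\bmod 3$, and $3\mid[\mathcal O_K:\mathbb{Z}[\pi,\bar\pi]]$ together produce a non-cyclic member, yet you never verify that the index condition is forced by the first two, nor that a larger order (or module over it) really yields a non-cyclic $3$-part. The paper bypasses this entirely by citing the criterion from \cite{GIANGRECOMAIDANA2019139}: the isogeny class contains a non-cyclic variety if and only if some prime $\ell$ satisfies $\ell^2\mid f(1)$ and $\ell\mid f'(1)$. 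Since $f(1)=h(q+1)$ and $f'(1)=(1-q)h'(q+1)+g\,h(q+1)$, this reduces exactly to $9\mid h(n)$ and $3\mid h'(n)$ for $n\in\{0,1,2\}$ according as $q\equiv -1,0,1\pmod 3$---no index condition, and no separate Dieudonn\'e-module argument for $p=3$ (the criterion is purely numerical in $f(1),f'(1)$ and holds for $\ell=p$ as well).

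\textbf{The construction.} Your CRT-plus-interlacing sketch does not pin down which $(g,q)$ fail; saying ``provided $q$ is not too small relative to $g$'' does not explain why the exceptions are exactly $(2,\mathbb{F}_2),(2,\mathbb{F}_3),(3,\mathbb{F}_2)$ rather than, say, $(5,\mathbb{F}_2)$. The paper handles this by brute force: for $2\le g\le 13$ it exhibits explicit polynomials $h$ (one for each residue of $q$ mod $3$) whose maximal root fits under $2\sqrt q$ for the smallest relevant $q$, and checks the three exceptional pairs against the LMFDB. For $g\ge 14$ it writes $h$ as an integer combination of rescaled Chebyshev polynomials $T_i$, uses Hsu's result to get irreducibility via the reduction mod $2$, adjusts the last two or three coefficients by bounded even integers to force the congruences mod $3$ and $9$ and coprimality of $h(0)$ with $q$, and then applies Howe's lemma (a quantitative bound of the form $\sum|a_i|/2^{i/2}<1$) to guarantee all roots lie in $(-2\sqrt 2,2\sqrt 2)$. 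This is the step your proposal flags as the ``main obstacle'' but does not resolve.
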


\begin{remark}
For $q$ big enough (asymptotically and depending on $g$), Theorem \ref{thm:nonexistence} follows from \cite[Theorem 2.1]{GIANGRECOMAIDANA2020101628}, \cite{GIANGRECOMAIDANA2021101703} and \cite[Theorem 2]{HOWE2002139}.
\end{remark}

Thus, it would be natural to study the problem locally, in the following sense.
One could think that for a set of primes $S$, we can find a set $\mathcal{C}_{g,S}$ of prime powers $q$ such that all the simple $g$-dimensional abelian varieties defined over $\mathbb{F}_q$ are cyclic except possibly at the primes in $S$. Again, it follows from  \cite[Theorem 2.1]{GIANGRECOMAIDANA2020101628}, \cite{GIANGRECOMAIDANA2021101703} and \cite[Theorem 2]{HOWE2002139} that such $\mathcal{C}_{g,S}$ cannot be infinite. 
It would be interesting to explore the maximal element of, for instance,  $\mathcal{C}_{g,\{2,3\}}$, but we let that for an upcoming research.

\section{Proof}
\subsection*{Strategy}
For any $g>1$ and any $q$, we will search for an abelian variety defined over $\mathbb{F}_q$ of dimension $g$ that is not cyclic. We are going to find it by finding an isogeny class that is not cyclic. We have a criterion for this (see Section \ref{sec:cylicity_criterion} below). In order to do that, every $q$-Weil polynomial $f$ can be converted to another simpler polynomial $h$. The converse holds; every such $h$ can produce an $f$ for at least a family of values of $q$, provided that some conditions are verified. The cyclicity criterion can also be converted in terms of $h$. 
The most technical part is to find the polynomial $h$ with the required arithmetic conditions for every $g$ and every family of $q$'s.
We are going to give the polynomials $h$ explicitly for dimension $\leq 13$.
There will be at least three polynomials $h$ for every $g$, since they depend on the value of $q$ modulo $3$.
Weil polynomials are given directly in the exceptional cases.
A similar strategy used in \cite{HOWE2002139} (that uses a result of \cite{HSU199685}) allows us to prove the existence of non-cyclic isogeny classes (equivalently, the existence of polynomials $h$ with the required arithmetic properties) for $g\geq 14$. 

We start by giving some basic information about abelian varieties over finite fields. 
After that we explore the simpler polynomials $h$. 
Then we study the cyclicity of the varieties depending on the Weil polynomial and the polynomials $h$. 
Finally we prove the existence of the required polynomials $h$\footnote{At the end of the .tex file, the list of polynomials can be found commented, as well as some Magma (\cite{MR1484478}) codes that can be use to verify results.}.

\subsection*{Abelian varieties over finite fields}
We refer the reader to \cite{mumford1970abelian} for the general theory of abelian varieties, and to \cite{Waterhouse1969} for abelian varieties over finite fields.
Let $q=p^r$ be a power of a prime, and let $\mathbb{F}_q$ be a finite field with $q$ elements. To every abelian variety $A$ defined over $\mathbb{F}_q$ we can associate its \emph{Weil polynomial}: the characteristic polynomial of its Frobenius endomorphism acting on its Tate module. This polynomial is an invariant of the isogeny class. Moreover, from the Honda-Tate theory, it defines completely the isogeny class. Thus, we can consider the Weil polynomial $f_\mathcal{A}$ of an isogeny class $\mathcal{A}$. 
The Weil polynomial $f_\mathcal{A}$ is a $q$\emph{-Weil polynomial}, i.e. it is a monic polynomial with integer coefficients whose roots have all absolute value $\sqrt{q}$. A $q$-Weil polynomial is said to be \emph{ordinary} if its central term is coprime with $q$.
Not every irreducible $q$-Weil polynomial is the Weil polynomial of a simple isogeny class of abelian varieties. However, when restricted to ordinary isogeny classes, there is a bijection between isogeny classes of simple ordinary abelian varieties defined over $\mathbb{F}_q$ and irreducible ordinary $q$-Weil polynomials (this is the ordinary Honda--Tate theory, see \cite[Theorem 3.3]{Howe1995}).

Over real numbers, $q$-Weil polynomials have the form
\begin{align}\label{eq:Weil_p_real}
\prod_{i=1}^g (x^2 - \alpha_i x + q), \quad \alpha_i\in \mathbb{R} \text{ and } |\alpha_i|\leq 2\sqrt{q}, 
\end{align}
provided that the roots are complex numbers. If a $q$-Weil polynomial has real roots, then it has the form $(x \pm\sqrt{q})^2$ or $(x^2-q)^2$ (see \cite[p.~528]{Waterhouse1969}). 

\subsection*{A ``reduction'' of Weil polynomials}
To every Weil polynomial of the form of equation (\ref{eq:Weil_p_real}), we attach to it the polynomial 
\begin{align}\label{eq:h_form}
\prod_{i=1}^{g} (x-\alpha_i),
\end{align}
that in this paper we usually call it $h$. On the other side, every polynomial of the form of equation (\ref{eq:h_form}) produces a $q$-Weil polynomial $\prod_{i=1}^{g} (x^2- \alpha_i x+q) \in \mathbb{Z}[x]$, provided that $2\sqrt{q} \geq \max \{\mid \alpha_i \mid\}$. 
Moreover
\begin{lemma}
The polynomial $f(x)=\prod_{i=1}^{g} (x^2- \alpha_i x+q)$ is irreducible over $\mathbb{Q}$ if and only if the polynomial $h(x)=\prod_{i=1}^{g} (x-\alpha_i)$ is irreducible over $\mathbb{Q}$. The Weil polynomial $f$ is ordinary if and only if $h(0)$ is coprime with $q$.
\end{lemma}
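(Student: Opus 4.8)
The plan is to exploit the substitution $y = x + q/x$. Since $x^{2} - \alpha_i x + q = x\bigl(x + \tfrac{q}{x} - \alpha_i\bigr)$, multiplying over $i = 1,\dots,g$ yields the identity
\[
f(x) = x^{g}\, h\!\left(x + \frac{q}{x}\right),
\]
which I regard as an equality of Laurent polynomials. Writing $h(y) = \sum_{j=0}^{g} b_j y^{j}$ with $b_g = 1$ and $b_0 = h(0)$, expansion gives $f(x) = \sum_{j,k} b_j \binom{j}{k} q^{j-k} x^{g-j+2k}$, from which one reads off at once that $f \in \mathbb{Z}[x]$ (resp.\ $\mathbb{Q}[x]$) exactly when $h$ is, the passage $h\mapsto f$ being invertible over $\mathbb{Q}$. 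This identity is the engine for the whole lemma.

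For the ordinarity clause I would simply isolate the central coefficient of $f$ from that expansion: the exponent $g - j + 2k$ equals $g$ exactly when $j = 2k$, so the coefficient of $x^{g}$ in $f$ is $\sum_{k\ge 0} b_{2k}\binom{2k}{k} q^{k}$, which is congruent to $b_0 = h(0)$ modulo $q$. Hence the central term of $f$ is coprime to $q$ if and only if $h(0)$ is, which is precisely the claim.

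For the irreducibility equivalence I would argue both directions. If $h = h_1 h_2$ with each $h_i \in \mathbb{Q}[x]$ monic of degree $g_i \ge 1$, then the identity factors $f$ as $\bigl(x^{g_1} h_1(x+q/x)\bigr)\bigl(x^{g_2} h_2(x+q/x)\bigr)$, and the binomial expansion above shows each factor is a genuine monic polynomial in $\mathbb{Q}[x]$ of degree exactly $2g_i$ with $2 \le 2g_i \le 2g-2$, so $f$ is reducible. Conversely, assume $h$ irreducible and let $\alpha$ be a root, so $[\mathbb{Q}(\alpha):\mathbb{Q}] = g$; pick a root $\pi$ of $x^{2} - \alpha x + q$, which is then a root of $f$. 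We have $[\mathbb{Q}(\pi):\mathbb{Q}(\alpha)] \le 2$, and I claim it equals $2$: otherwise $\pi \in \mathbb{Q}(\alpha)$, but $\mathbb{Q}(\alpha)$ is totally real (all roots of $h$ are real), forcing $\pi = \bar\pi = \pm\sqrt{q}$ and $\alpha = \pm 2\sqrt{q}$ — the degenerate situation in which a quadratic factor has a double real root, excluded once we assume (as is implicit in working with the form $\prod (x^{2}-\alpha_i x+q)$ having non-real roots) that $|\alpha_i| < 2\sqrt{q}$. Granting degree $2$, we get $[\mathbb{Q}(\pi):\mathbb{Q}] = 2g = \deg f$, so the minimal polynomial of $\pi$ is a monic degree-$2g$ divisor of $f$, hence equals $f$, and $f$ is irreducible.

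The hard part is essentially bookkeeping: verifying that $x^{g_i} h_i(x+q/x)$ is an honest polynomial of the right degree rather than merely a Laurent polynomial (settled by the explicit expansion), and keeping track of the degenerate boundary cases $|\alpha_i| = 2\sqrt{q}$, where a factor $x^{2}-\alpha_i x+q$ need not even lie in $\mathbb{Q}[x]$ individually and the equivalence can fail (e.g.\ $h = x^{2}-4q$ is irreducible but $f = (x^{2}-q)^{2}$). These cases play no role in the application to simple abelian varieties, where $f$ is irreducible and hence has no real roots. What remains is just the degree identity $\deg f = 2\deg h$ together with the field tower $\mathbb{Q}\subseteq\mathbb{Q}(\alpha)\subseteq\mathbb{Q}(\pi)$.
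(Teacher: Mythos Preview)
Your argument is correct; the ordinarity clause is exactly the paper's (the middle coefficient of $f$ is $h(0)$ plus a multiple of $q$), and for irreducibility the paper simply cites an external lemma while you supply the standard self-contained proof via $f(x)=x^{g}h(x+q/x)$ and the tower $\mathbb{Q}\subset\mathbb{Q}(\alpha)\subset\mathbb{Q}(\pi)$. Your remark on the boundary case $|\alpha_i|=2\sqrt{q}$ (where, e.g., $h=x^{2}-4q$ is irreducible but $f=(x^{2}-q)^{2}$ is not) is a useful caveat left implicit in the paper, which works throughout under the standing assumption that $f$ has no real roots.
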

\begin{proof}
The proof of the first part is very similar as \cite[Lemma 2.1]{berardini2022weil}. For the last part, note that middle term of $f$ equals the constant term of $h$ plus a multiple of $q$. Thus, to check the ordinariness of the Weil polynomial $f$, we need to check that the constant term of $h$ is coprime with $q$.    
\end{proof}
We recall that if the Weil polynomial is ordinary, then it is the Weil polynomial of an abelian variety.

\subsection*{(Non-)Cyclic isogeny classes}\label{sec:cylicity_criterion}
We say that an isogeny class of abelian varieties is \emph{cyclic} if all its varieties are cyclic. We denote by $\widehat{z}$ the quotient of an integer $z$ by its radical. As result of the author's work, the following is the key of the proof:
\begin{theorem*}[2019, \cite{GIANGRECOMAIDANA2019139}]\label{th:weil_polynomial_criterion}
Let $\mathcal{A}$ be a $g$-dimensional $\mathbb{F}_q$-isogeny class of abelian varieties corresponding to the Weil polynomial $f_\mathcal{A}(x)$. Then $\mathcal{A}$ is cyclic if and only if  $f'_\mathcal{A}(1)$ is coprime with $\widehat{f_\mathcal{A}(1)}$.
\end{theorem*}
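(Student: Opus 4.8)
\emph{Proof proposal.} Write $f = f_\mathcal{A}$ and $N = f(1)$. Since $N = \#A(\mathbb{F}_q)$ for every $A$ in the isogeny class $\mathcal{A}$ (it is the degree of the isogeny $F-1$), the group $A(\mathbb{F}_q)$ is finite abelian of order $N$, and it is cyclic if and only if each of its $\ell$-primary components is. I would begin with a purely elementary reformulation of the arithmetic condition: $f'(1)$ is coprime to $\widehat{N}$ if and only if there is no prime $\ell$ with $\ell\mid f'(1)$ and $\ell^2\mid N$ simultaneously; and, because $\ell^2\mid N$ forces $\ell\mid f(1)$, the two divisibilities $\ell\mid f(1)$, $\ell\mid f'(1)$ together say precisely that $1$ is a root of $\bar f := f\bmod\ell$ of multiplicity $\geq 2$. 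Hence the theorem is equivalent to the assertion, to be proved one prime at a time: the $\ell$-primary component of $A(\mathbb{F}_q)$ is cyclic for \emph{every} $A\in\mathcal{A}$ if and only if $\ell^2\nmid N$ or $(x-1)^2\nmid\bar f$ in $\mathbb{F}_\ell[x]$.

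The implication ``coprimality $\Rightarrow$ $\mathcal{A}$ cyclic'' is short. Fix $\ell$ and $A\in\mathcal{A}$. If $\ell^2\nmid N$, the $\ell$-primary part has order $\ell^{v_\ell(N)}\leq\ell$ and is cyclic. If $\ell^2\mid N$ but $1$ is at most a simple root of $\bar f$, then (for $\ell\neq p$) the $\ell$-primary part is $T_\ell A/(F-1)T_\ell A$, and its minimal number of generators equals $\dim_{\mathbb{F}_\ell}\ker\bigl(F-1\mid A[\ell]\bigr)$, the dimension of the $1$-eigenspace of Frobenius on $A[\ell]\cong\mathbb{F}_\ell^{2g}$, whose characteristic polynomial is $\bar f$; since $1$ is a simple root, this eigenspace is one-dimensional, so the $\ell$-primary part is cyclic, for every $A$. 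For $\ell = p$ the same argument applies with $A[p](\overline{\mathbb{F}}_q)$ and the unit-root factor of $f$ modulo $p$ (an isogeny invariant) in place of $A[\ell]$ and $\bar f$, via the connected-étale sequence. Summing over $\ell$ proves the implication.

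The converse --- $\mathcal{A}$ cyclic $\Rightarrow$ coprimality --- is the crux, and I would prove it by contraposition: given a prime $\ell$ with $(x-1)^2\mid\bar f$ and $\ell^2\mid N$, exhibit an $A\in\mathcal{A}$ whose $\ell$-primary part is not cyclic. The candidate is the variety in the class with maximal endomorphism ring. In a simple ordinary class, with $K := \mathbb{Q}[x]/(f) = \mathbb{Q}(\pi)$ a CM field of degree $2g$, Deligne's equivalence furnishes an $A$ with $\mathrm{End}(A)=\mathcal{O}_K$, for which $A(\mathbb{F}_q)\cong\mathcal{O}_K/(\pi-1)\mathcal{O}_K$; so one must show this group has non-cyclic $\ell$-part. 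Writing $\mathcal{O}_K\otimes\mathbb{Z}_\ell=\prod_{v\mid\ell}\mathcal{O}_{K_v}$, the $\ell$-part is $\prod_v\mathcal{O}_{K_v}/(\pi-1)$; here $(x-1)^2\mid\bar f$ forces the sum of $[K_v:\mathbb{Q}_\ell]$ over the places $v\mid\ell$ with $\pi\equiv 1\pmod{\mathfrak{m}_v}$ to be $\geq 2$. A short case analysis then gives non-cyclicity: if two or more such places exist, each contributes a factor of order $\geq\ell$; if exactly one does, its degree over $\mathbb{Q}_\ell$ is $\geq 2$, and either its residue degree is $\geq 2$ (so $\mathcal{O}_{K_v}/(\pi-1)$ surjects onto the $\mathbb{F}_\ell$-vector space $\mathcal{O}_{K_v}/\mathfrak{p}_v$ of dimension $\geq 2$), or it is totally ramified, in which case $\ell^2\mid N$ forces $\pi\equiv 1\pmod{\mathfrak{p}_v^2}$ and $\mathcal{O}_{K_v}/(\pi-1)$ surjects onto $\mathcal{O}_{K_v}/\mathfrak{p}_v^2\cong(\mathbb{Z}/\ell)^2$. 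Finally I would reduce the general case to this: for non-simple classes by taking products of such maximal varieties over the simple factors, and for non-ordinary classes by replacing Deligne's equivalence with Honda--Tate theory together with Dieudonné theory at $p$; in each case one still needs to know that the relevant lattice is realized, compatibly with a polarization, by an actual member of $\mathcal{A}$.

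The main obstacle is precisely this last direction. On the local side, the computation must genuinely use \emph{both} hypotheses: $(x-1)^2\mid\bar f$ alone is insufficient (one may have $1$ a double root of $\bar f$ while $v_\ell(N)=1$, forcing a cyclic $\ell$-part --- e.g., $f = x^2-2x+3$ over $\mathbb{F}_3$ at $\ell = 2$), and so is $\ell^2\mid N$ alone (by the easy direction). On the global side, one must check that a lattice with non-cyclic quotient really occurs as a Tate module --- or, at $p$, a Dieudonné module --- inside the isogeny class; this is transparent via the maximal order for ordinary classes, but needs the integral refinements of Honda--Tate theory in general, and the case $\ell = p$ is the most delicate, since $p$ may divide $\#A(\mathbb{F}_q)$ while ordinary Tate modules are unavailable there.
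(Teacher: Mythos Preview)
The paper does not prove this theorem: it is quoted verbatim from the author's earlier work \cite{GIANGRECOMAIDANA2019139} and used as a black box (``the key of the proof''), so there is no proof in the present paper to compare your attempt against.

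That said, your argument is sound where it is detailed. The forward implication via the $1$-eigenspace of Frobenius on $A[\ell]$ is clean and correct, including the identification $A(\mathbb{F}_q)[\ell^\infty]\cong T_\ell A/(F-1)T_\ell A$ for $\ell\neq p$. For the converse in the simple ordinary case, using Deligne's equivalence to realize $A(\mathbb{F}_q)\cong\mathcal{O}_K/(\pi-1)$ and then doing the local case analysis at the primes above $\ell$ is exactly the right move; your three cases (two places, one place with $f_v\geq 2$, one place with $f_v=1$ and $e_v\geq 2$) are exhaustive and each is handled correctly --- in particular the observation that $\ell\in\mathfrak{p}_v^{e_v}\subseteq\mathfrak{p}_v^2$ forces $\mathcal{O}_{K_v}/\mathfrak{p}_v^2\cong(\mathbb{Z}/\ell)^2$ in the last case. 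Your reduction of the non-simple case to products also goes through after a short induction on the factorization of $f$.

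You are right that the genuine residual difficulty lies in the non-ordinary simple case and at $\ell=p$, where Deligne's description is unavailable and one must produce a specific lattice (Tate or Dieudonn\'e) realized by an actual variety in the class. You flag this honestly; filling it in requires the integral refinements you allude to (or, as in the original reference, a slightly different organization of the argument that treats all $\ell$ uniformly through the structure of $\mathrm{End}(A)$ acting on torsion).
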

In other words, an isogeny class has a non cyclic variety if there is a prime $\ell$ such that $\ell\vert f'(0)$ and $\ell^2 \vert f(0)$.
In this case, we say that the isogeny class is non-$\ell$-cyclic.
We will prove Theorem \ref{thm:nonexistence} by proving that for each $\mathbb{F}_q$ there is at least one non-cyclic isogeny class. 
In order to apply the cyclicity criterion, we express $f(1)$ and $f'(1)$ in function of $h$ and $q$:

\begin{align*}
f(1)&=\prod_{i=1}^{g} (1+q-\alpha_i) = h(1+q), \quad \text{and}\\
f'(1)&=\sum_{i=1}^g (2-\alpha_i) \prod_{j\neq i} (1+q-\alpha_j)=\sum_{i=1}^g (1-q+1+q-\alpha_i) \prod_{j\neq i} (1+q-\alpha_j) \\
&=(1-q)h'(1+q)+gh(1+q).    
\end{align*}

Let $\ell$ be an integer prime such that
\begin{itemize}
\item $\ell \vert q+1$, then
\begin{align*}
f(1) &\equiv (1+q)h'(0) + h(0) \pmod{\ell^2} \\
f'(1) &\equiv (1-q)h'(0)+gh(0) \pmod{\ell}
\end{align*}
In order to have both equivalences equal to $0$, it is enough to have $\ell\vert h'(0)$ and $\ell^2 \vert h(0)$.
\item $\ell \vert q-1$, then
\begin{align*}
f(1) &= h(q+1) = \prod_{i=1}^{g} (q-1+2-\alpha_i) \equiv  (q-1)h'(2)+h(2) \pmod{\ell^2} \\
f'(1) &\equiv gh(q+1) \equiv gh(2) \pmod{\ell}  
\end{align*}
In order to have both equivalences equal to $0$, it is enough to have $\ell\vert h'(2)$ and $\ell^2 \vert h(2)$.
\end{itemize}
As in general we want to prove the non-$3$-cyclicity, we are going to use always $\ell=3$, so it remains only one case
\begin{itemize}
\item $q=3^r$, then
\begin{align*}
f(1) &= h(q+1) \equiv  3h'(1)+h(1) \pmod{9} \\
f'(1) &\equiv (1+q)h'(1)+gh(1) \pmod{3}    
\end{align*}
In order to have both equivalences equal to $0$, it is enough to have $3\vert h'(1)$ and $9 \vert h(1)$. 
\end{itemize}

Thus, in all the cases, we need $3\vert h'(n)$ and $9 \vert h(n)$ for $n\in\{0,1,2\}$ as necessary and sufficient conditions for the non-$3$-cyclicity.
Table \ref{tab:cases_q_summary} summarizes the conditions needed for $h$. In all cases we need $h$ to be an irreducible polynomial $\in \mathbb{Z}[x]$ with all real roots and $h(0)$ coprime with $q$.
Note that if the maximal root of $h$ in absolute value is bigger than the given in the last column of the table, this $h$ is still valid for bigger values of $q$.

\begin{table}[ht]
\renewcommand\arraystretch{1.25}
    \centering
    \begin{tabular}{c|c|c|c}
      Case & Conditions for non-$3$-cyclicity & Min $q$ & Max root for $h$ (in absolute value)   \\ \hline \hline
      $3 \vert q+1$   &  $3\vert h'(0)$ and $9 \vert h(0)$ & $2$ & $2\sqrt{2} \approx 2.828$ \\ \hline
      $3 \vert q-1$   &  $3\vert h'(2)$ and $9 \vert h(2)$ & $4$ & $2\sqrt{4} = 4$ \\ \hline
      $3^r = q$   &  $3\vert h'(1)$ and $9 \vert h(1)$ & $3$ & $2\sqrt{3} \approx 3.464$ \\
    \end{tabular}
    \caption{Conditions on $h$ according to each case on $q$.}
    \label{tab:cases_q_summary}
\end{table}

\subsection*{Existence of required polynomials}
\subsubsection*{Dimensions smaller than 14}
In this subsection, we prove the following result.
\begin{proposition}
There exist a non-$(3)$-cyclic isogeny class of $g$-dimensional abelian varieties defined over $\mathbb{F}_q$ for any $g$ such that $2\leq g\leq 13$, except in the cases $(g,\mathbb{F}_q) \in \{(2,\mathbb{F}_2), (2,\mathbb{F}_3), (3,\mathbb{F}_2)\}$. In the exceptional cases, the non-$(\ell)$-cyclic classes are only for $\ell=2$, and the Weil polynomials of the complete list of non-cyclic isogeny classes are given in Table \ref{tab:exceptions}.
\end{proposition}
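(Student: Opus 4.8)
The plan is to treat the three residue classes of $q$ modulo $3$ separately, and within each class to exhibit an explicit family of polynomials $h(x)\in\mathbb{Z}[x]$ of degree $g$ satisfying the relevant row of Table~\ref{tab:cases_q_summary}, for each $g$ with $2\le g\le 13$. Concretely, for the case $3\mid q+1$ I would produce, for each such $g$, an irreducible degree-$g$ polynomial $h$ with all real roots in $(-2\sqrt2,2\sqrt2)$, with $h(0)$ odd (so it is coprime with every $q=2^r$ and with every $q\equiv 2\pmod 3$), and with $3\mid h'(0)$ and $9\mid h(0)$; likewise a family for $3\mid q-1$ with roots in $(-4,4)$, $h(2)$ divisible by $9$, $h'(2)$ divisible by $3$, and $h(0)$ coprime to $q$; and a family for $q=3^r$ with roots in $(-2\sqrt3,2\sqrt3)$, $9\mid h(1)$, $3\mid h'(1)$, and $h(0)$ coprime to $3$. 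Having such an $h$, the lemma on the ``reduction'' of Weil polynomials promotes it to an ordinary irreducible $q$-Weil polynomial $f(x)=\prod_i(x^2-\alpha_i x+q)$, Honda--Tate gives a simple ordinary abelian variety in that isogeny class, and the computation of $f(1),f'(1)$ in terms of $h$ carried out in the excerpt, together with the cyclicity criterion of \cite{GIANGRECOMAIDANA2019139}, shows the class is non-$3$-cyclic. So once the polynomials are in hand, the rest is bookkeeping.

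The natural way to build these $h$ is to start from a ``base'' polynomial with the divisibility built in and then perturb it to gain irreducibility and the correct root interval. For instance, to force $9\mid h(0)$ and $3\mid h'(0)$ one can look for $h$ of the form $x^g + \cdots + 3a_1 x + 9a_0$ (the linear coefficient is $h'(0)$, the constant is $h(0)$); arranging all roots real and inside the prescribed interval is a separation-of-zeros problem that can be handled by taking products of small quadratic/linear factors with controlled roots and then checking. Rather than search blindly, I would exploit that a totally-real interval of length $4$ (or $4\sqrt2$, or $4\sqrt3$) is long enough to host many small algebraic integers, and use known families of totally real integers with prescribed small houses (e.g., via Chebyshev-like or interval-packing constructions, in the spirit of \cite{HSU199685} as used in \cite{HOWE2002139}) to guarantee existence; for $g\le 13$ one can simply list explicit examples (the paper says these are tabulated at the end of the source). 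Irreducibility over $\mathbb{Q}$ can be certified prime-by-prime via reduction mod a small prime or by an Eisenstein-type argument at $3$ once $9\mid h(0)$ but $27\nmid h(0)$ and $3\nmid$ leading coefficient—though one must be careful that Eisenstein at $3$ is incompatible with $3\mid$ all lower coefficients, so in the $3\mid q+1$ case one typically certifies irreducibility by another means (e.g., mod $2$).

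The exceptional triples $(g,\mathbb{F}_q)\in\{(2,\mathbb{F}_2),(2,\mathbb{F}_3),(3,\mathbb{F}_2)\}$ have to be handled by hand: the totally-real interval is too short (length $\le 2\sqrt2$ for $g=2,3$ over $\mathbb{F}_2$, length $2\sqrt3$ for $g=2$ over $\mathbb{F}_3$) to fit a degree-$g$ totally real algebraic integer whose minimal polynomial meets the $3$-adic congruences while being irreducible; a finite check over all $q$-Weil polynomials of that dimension (there are only finitely many, enumerable directly) shows no non-$3$-cyclic simple ordinary class exists, while a non-$2$-cyclic one does, and one records the full list in Table~\ref{tab:exceptions}. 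I expect the main obstacle to be precisely the construction in the generic cases: simultaneously meeting the $3$-adic congruences on $h(n)$ and $h'(n)$, keeping all roots real and strictly inside a short interval, \emph{and} keeping $h$ irreducible is a genuine constraint—particularly for the smallest dimensions $g=2,3,4$ where there is little room—so the bulk of the real work is finding (and then certifying) the explicit polynomials $h$ for each residue class and each $g\le 13$, which is what the remainder of the section carries out.
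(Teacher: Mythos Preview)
Your proposal matches the paper's approach: for each $g\le 13$ and each residue class of $q\pmod 3$ one exhibits an explicit totally real irreducible $h$ meeting the congruences in Table~\ref{tab:cases_q_summary}, promotes it to an ordinary irreducible $q$-Weil polynomial via the lemma, and handles the three exceptional pairs by a finite enumeration of all Weil polynomials (the paper uses the LMFDB for this). The paper's ``proof'' for $g\le 13$ is literally the tables of such $h$.

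One slip worth correcting: asking that $h(0)$ be odd does \emph{not} make it coprime to every $q\equiv 2\pmod 3$---think $q=5,11,17,23,\dots$. What actually works in the $3\mid q+1$ column is to arrange that the only prime dividing $h(0)$ is $3$ (typically $h(0)=\pm 9$), which is automatically coprime to any $q$ with $3\nmid q$; this is what the paper's tables do in almost every row. The same issue bites in the $3\mid q-1$ column and sometimes forces a second polynomial: for $g=2$ the first choice $h=x^2-x-11$ has $11\mid h(0)$ and fails at $q=121$, so the paper supplies an alternate. Your plan to ``keep $h(0)$ coprime to $q$'' needs this extra care, since a single $h$ must serve infinitely many $q$ in its residue class.
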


\begin{table}[ht]
\renewcommand\arraystretch{1.25}
    \centering
    \begin{tabular}{c|c|c}
Dimension & Base field & Weil polynomial  \\ \hline
2 & $\mathbb{F}_2$ & $x^4-x^2+4$  \\ \hline
2 & $\mathbb{F}_3$ & $x^4-2x^3+2x^2-6x+9$ \\ \hline
2 & $\mathbb{F}_3$ & $x^4-x^3-2x^2-3x+9$ \\ \hline
2 & $\mathbb{F}_3$ & $x^4-x^3+2x^2-3x+9$ \\ \hline
2 & $\mathbb{F}_3$ & $x^4-6x^2+9$ \\ \hline
2 & $\mathbb{F}_3$ & $x^4-6x^2+9$ \\ \hline
2 & $\mathbb{F}_3$ & $x^4-2x^2+9$ \\ \hline
2 & $\mathbb{F}_3$ & $x^4+x^3-2x^2+3x+9$ \\ \hline
2 & $\mathbb{F}_3$ & $x^4+x^3+2x^2+3x+9$ \\ \hline
2 & $\mathbb{F}_3$ & $x^4+2x^3+2x^2+6x+9$ \\ \hline
3 & $\mathbb{F}_2$ & $x^6 - x^5 + x^4 -3x^3 + 2x^2 -4x + 8$ \\ \hline
3 & $\mathbb{F}_2$ & $x^6  - x^4 -2x^3 - 2x^2 + 8$ \\ \hline
3 & $\mathbb{F}_2$ & $x^6  - x^4 +2x^3 - 2x^2 + 8$ \\ \hline
3 & $\mathbb{F}_2$ & $x^6 + x^5 + x^4 +3x^3 + 2x^2 +4x + 8$ \\ \hline
    \end{tabular}
    \caption{These isogeny classes are non-$2$-cyclic. The rest of isogeny classes of the corresponding dimensions and base field, are all cyclic. Note that there are ordinary and non-ordinary isogeny classes in the list.}
    \label{tab:exceptions}
\end{table}

Tables \ref{tab:poly_h_1} and \ref{tab:poly_h_2} give the list of polynomials $h$. In general, they can be applied for any $q$ (according to the cases of $q \pmod{3}$), to obtain the respective Weil polynomial.
Note that for $g=2$, the polynomial $h=x^{2} -x -11$ have a good bound, but its constant term is not always coprime with the $q$ for which $3|q-1$ (e.g. $q=121$), so we have another $h$ for that case.
The exceptions are precisely the exceptional cases of the proposition, where the bounds on the absolute values of the roots of some of the polynomials $h$ do not allow to use them (they can be seen at the beginning of Table \ref{tab:poly_h_1}). In these cases, an exhaustive search on the LMFDB database (\cite{lmfdb}) shows that the only non cyclic varieties are the given in Table \ref{tab:exceptions}.

\begin{table}[ht]
\renewcommand\arraystretch{1.25}
{\tiny
    \centering
    \begin{tabular}{c|>{\centering\arraybackslash}m{6cm}|c|c|c|c}
 $g$ & $h$ & $\ell|h(0)$ & Case $q$ & Max real root & Valid for $q\geq$ \\ \hline\hline
 $2$ & $x^{2} -x -11 $ & $[ 11 ]$ & $3|q-1$ & $3.854$ & $4$ \\ \hline
 $2$ & $x^{2} +2x -17 $ & $[ 17 ]$ & $3|q-1$ & $5.243$ & $7$ \\ \hline
 $2$ & $x^{2} +x -11 $ & $[ 11 ]$ & $3^r=q$ & $3.854$ & $9$ \\ \hline
 $2$ & $x^{2} -18 $ & $[ 2, 3 ]$ & $3|q+1$ & $4.243$ & $5$ \\ \hline\hline
 $3$ & $x^{3} -9x + $ & $[]$ & $3|q-1$ & $3.054$ & $4$ \\ \hline
 $3$ & $x^{3} -3x^{2} -6x +17 $ & $[ 17 ]$ & $3^r=q$ & $3.227$ & $3$ \\ \hline
 $3$ & $x^{3} -4x^{2} -3x +9 $ & $[ 3 ]$ & $3|q+1$ & $4.204$ & $5$ \\
\hline\hline
 $4$ & $x^{4} -2x^{3} -6x^{2} +7x + $ & $[]$ & $3|q-1$ & $3.165$ & $4$ \\ \hline
 $4$ & $x^{4} -4x^{3} -8x^{2} +48x -46 $ & $[ 2, 23 ]$ & $3^r=q$ & $3.454$ & $3$
\\ \hline
 $4$ & $x^{4} -x^{3} -8x^{2} +6x +9 $ & $[ 3 ]$ & $3|q+1$ & $2.682$ & $2$ \\
\hline\hline
 $5$ & $x^{5} -10x^{3} +x^{2} +18x - $ & $[]$ & $3|q-1$ & $2.843$ & $4$ \\
\hline
 $5$ & $x^{5} +4x^{4} -4x^{3} -22x^{2} -x +13 $ & $[ 13 ]$ & $3^r=q$ & $3.389$ &
$3$ \\ \hline
 $5$ & $x^{5} -10x^{3} +x^{2} +21x -9 $ & $[ 3 ]$ & $3|q+1$ & $2.677$ & $2$ \\
\hline\hline
 $6$ & $x^{6} -12x^{4} +34x^{2} +2x -3 $ & $[ 3 ]$ & $3|q-1$ & $2.786$ & $4$ \\
\hline
 $6$ & $x^{6} -12x^{4} +32x^{2} -x -11 $ & $[ 11 ]$ & $3^r=q$ & $2.895$ & $3$ \\
\hline
 $6$ & $x^{6} -12x^{4} +34x^{2} -9 $ & $[ 3 ]$ & $3|q+1$ & $2.789$ & $2$ \\
\hline\hline
 $7$ & $x^{7} -14x^{5} +56x^{3} -2x^{2} -55x - $ & $[]$ & $3|q-1$ & $2.805$ &
$4$ \\ \hline
 $7$ & $x^{7} -14x^{5} +56x^{3} -4x^{2} -58x + $ & $[]$ & $3^r=q$ & $2.863$ &
$3$ \\ \hline
 $7$ & $x^{7} -14x^{5} +56x^{3} -2x^{2} -57x +9 $ & $[ 3 ]$ & $3|q+1$ & $2.791$
& $2$ \\ \hline\hline
 $8$ & $x^{8} +8x^{7} +12x^{6} -39x^{5} -84x^{4} +61x^{3} +122x^{2} -41x + $ &
$[]$ & $3|q-1$ & $3.683$ & $4$ \\ \hline
 $8$ & $x^{8} -16x^{6} +81x^{4} +x^{3} -129x^{2} -2x + $ & $[]$ & $3^r=q$ &
$2.765$ & $3$ \\ \hline
 $8$ & $x^{8} -16x^{6} +81x^{4} +x^{3} -130x^{2} -3x +9 $ & $[ 3 ]$ & $3|q+1$ &
$2.758$ & $2$ \\ \hline\hline
 $9$ & $x^{9} +9x^{8} +18x^{7} -42x^{6} -144x^{5} +37x^{4} +298x^{3} +15x^{2}
-164x - $ & $[]$ & $3|q-1$ & $3.773$ & $4$ \\ \hline
 $9$ & $x^{9} -18x^{7} +108x^{5} +x^{4} -240x^{3} -9x^{2} +146x +2 $ & $[ 2 ]$ &
$3^r=q$ & $2.788$ & $3$ \\ \hline
 $9$ & $x^{9} -18x^{7} +108x^{5} +x^{4} -240x^{3} -9x^{2} +147x +9 $ & $[ 3 ]$ &
$3|q+1$ & $2.776$ & $2$ \\ \hline
    \end{tabular}
    \caption{List of polynomials $h$ (first part).}
    \label{tab:poly_h_1}
}
\end{table}

\begin{table}[ht]
\renewcommand\arraystretch{1.25}
{\tiny
    \centering
    \begin{tabular}{c| >{\centering\arraybackslash}m{6cm}|c|c|c|c}
 $g$ & $h$ & $\ell|h(0)$ & Case $q$ & Max real root & Valid for $q\geq$ \\ \hline\hline
 $10$ & $x^{10} +10x^{9} +25x^{8} -40x^{7} -210x^{6} -28x^{5} +510x^{4}
+201x^{3} -412x^{2} -113x - $ & $[]$ & $3|q-1$ & $3.826$ & $4$ \\ \hline
 $10$ & $x^{10} -20x^{8} +140x^{6} -400x^{4} +x^{3} +400x^{2} -7x -88 $ & $[ 2,
11 ]$ & $3^r=q$ & $2.810$ & $3$ \\ \hline
 $10$ & $x^{10} -20x^{8} +140x^{6} -400x^{4} +x^{3} +400x^{2} -6x -63 $ & $[ 3,
7 ]$ & $3|q+1$ & $2.796$ & $2$ \\ \hline\hline
 $11$ & $x^{11} -22x^{9} +176x^{7} -615x^{5} +871x^{3} +x^{2} -338x -9 $ & $[ 3
]$ & $3|q-1$ & $2.795$ & $4$ \\ \hline
 $11$ & $x^{11} -22x^{9} +176x^{7} -616x^{5} +880x^{3} +x^{2} -352x -5 $ & $[ 5
]$ & $3^r=q$ & $2.801$ & $3$ \\ \hline
 $11$ & $x^{11} -22x^{9} +176x^{7} -616x^{5} +x^{4} +880x^{3} -7x^{2} -351x +9 $
& $[ 3 ]$ & $3|q+1$ & $2.805$ & $2$ \\ \hline\hline
 $12$ & $x^{12} -12x^{11} +42x^{10} +20x^{9} -369x^{8} +360x^{7} +1036x^{6}
-1464x^{5} -1185x^{4} +1765x^{3} +519x^{2} -543x + $ & $[]$ & $3|q-1$ & $3.791$
& $4$ \\ \hline
 $12$ & $x^{12} -24x^{10} +216x^{8} -896x^{6} +x^{5} +1681x^{4} -10x^{3}
-1159x^{2} +20x +125 $ & $[ 5 ]$ & $3^r=q$ & $2.805$ & $3$ \\ \hline
 $12$ & $x^{12} -24x^{10} +216x^{8} -896x^{6} +x^{5} +1681x^{4} -10x^{3}
-1160x^{2} +21x +117 $ & $[ 3, 13 ]$ & $3|q+1$ & $2.810$ & $2$ \\ \hline\hline
 $13$ & $x^{13} -26x^{11} +260x^{9} -1248x^{7} +x^{6} +2912x^{5} -11x^{4}
-2912x^{3} +28x^{2} +833x -9 $ & $[ 3 ]$ & $3|q-1$ & $2.810$ & $4$ \\ \hline
 $13$ & $x^{13} -26x^{11} +260x^{9} -1248x^{7} +x^{6} +2913x^{5} -12x^{4}
-2921x^{3} +37x^{2} +847x -23 $ & $[ 23 ]$ & $3^r=q$ & $2.807$ & $3$ \\ \hline
 $13$ & $x^{13} -26x^{11} +260x^{9} -1248x^{7} +2913x^{5} +x^{4} -2922x^{3}
-7x^{2} +852x +9 $ & $[ 3 ]$ & $3|q+1$ & $2.809$ & $2$ \\ \hline
    \end{tabular}
    \caption{List of polynomials $h$ (second part).}
    \label{tab:poly_h_2}
}
\end{table}

\subsubsection*{Dimensions bigger or equal than 14}
In this subsection, we prove the following result.
\begin{proposition}\label{prop:g_ge_14}
There exist a non-$(3)$-cyclic isogeny class of $g$-dimensional abelian varieties defined over $\mathbb{F}_q$ for any prime power $q$ and any $g\geq 14$.
\end{proposition}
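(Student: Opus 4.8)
The plan is to reduce the statement, by the same mechanism already used for $g\le 13$, to the following: for every $g\ge 14$ and every prime power $q$, produce a monic irreducible $h\in\mathbb{Z}[x]$ of degree $g$, all of whose roots are real with absolute value $<2\sqrt q$, satisfying $3\mid h'(n)$ and $9\mid h(n)$ for the point $n=n(q)\in\{0,1,2\}$ prescribed by $q\bmod 3$ in Table~\ref{tab:cases_q_summary}, and with $h(0)$ coprime to $q$. Granting this, the reduction Lemma turns $h$ into an irreducible ordinary $q$-Weil polynomial $f(x)=\prod_i(x^2-\alpha_i x+q)$, and the cyclicity criterion of \cite{GIANGRECOMAIDANA2019139} shows that the corresponding simple ordinary isogeny class is non-$(3)$-cyclic. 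Since $2\sqrt2\le 2\sqrt q$ for every prime power $q$, it suffices to place all roots of $h$ strictly inside the single window $(-2\sqrt2,2\sqrt2)$.

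To build $h$ I would follow the perturbation method of Howe and Zhu \cite{HOWE2002139}. Take as base the Dickson polynomial $P_g(x)=D_g(x,2)$, the monic integer polynomial with $P_g(2\sqrt2\cos\theta)=2^{1+g/2}\cos(g\theta)$; its roots $2\sqrt2\cos\frac{(2k-1)\pi}{2g}$ lie strictly inside $(-2\sqrt2,2\sqrt2)$ and $P_g$ equioscillates between $\pm 2^{1+g/2}$ there. Set $h=P_g+R$ with $R=\sum_{k\le\lfloor g/2\rfloor}c_k D_k(x,2)$ expanded in the Dickson basis and with integer coefficients bounded by a constant times a fixed auxiliary prime $p\ne 3$. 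On $[-2\sqrt2,2\sqrt2]$ one has $|R|\le 9p\sum_{k\le g/2}2^{1+k/2}$, which is $\ll 2^{1+g/2}$ once $g$ is large, so by the intermediate value theorem at the $g+1$ alternation points of $P_g$ the polynomial $h$ still has $g$ real roots, all strictly inside the interval. Because $D_0,\dots,D_{\lfloor g/2\rfloor}$ is a $\mathbb{Z}$-basis of the polynomials of degree $\le\lfloor g/2\rfloor$, the bottom $\lfloor g/2\rfloor+1$ coefficients of $h$ can be chosen arbitrarily modulo $9p$; I would use this to force $3\mid h'(n)$, $9\mid h(n)$, and a suitable residue for $h(0)$ modulo $9p$, and then, by Hsu's equidistribution of degree-$g$ irreducible polynomials in $\mathbb{F}_p[x]$ with prescribed leading coefficients \cite{HSU199685}, to make $h\bmod p$ irreducible — so $h$, and hence $f$, is irreducible over $\mathbb{Q}$. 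When $q$ is a power of a prime $p$ one takes that $p$ as the auxiliary prime, and irreducibility of $h\bmod p$ already forces $p\nmid h(0)$, giving the coprimality for free; this is feasible for the finitely many $q$ small enough that $g\ge 14$ clears Hsu's bound, while for the remaining large $q$ one appeals to the asymptotic form of the theorem furnished by \cite[Theorem~2]{HOWE2002139}, \cite{GIANGRECOMAIDANA2020101628} and \cite{GIANGRECOMAIDANA2021101703}.

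The main obstacle is the tightness of the window: because $q=2$ (where $2\sqrt q=2\sqrt 2$) must be covered, the roots of $h$ are confined to an interval barely larger than $[-2,2]$, which caps the admissible size of the perturbation $R$; yet $R$ must simultaneously carry enough freedom for the two congruences modulo $9$, for placing $h(0)$ in a class coprime to $q$ (a multiple of $9$ when $n=0$), and for an irreducible reduction modulo $p$. What makes the two demands compatible is that the extrema of $D_g(x,2)$ grow like $2^{1+g/2}$ while the perturbations one actually needs grow only like $2^{g/4}$ in the Dickson basis, so the construction closes up for $g$ large; making this quantitative, so that the threshold can be taken to be exactly $g\ge 14$ — precisely the dimensions not already settled by the explicit polynomials of Tables~\ref{tab:poly_h_1} and~\ref{tab:poly_h_2} — is the technical heart of the argument.
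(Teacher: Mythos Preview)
Your overall strategy---perturb a Chebyshev/Dickson base polynomial, keep the roots in $(-2\sqrt2,2\sqrt2)$ via equioscillation (this is Howe's Lemma~11), and secure irreducibility through Hsu---is the same as the paper's. The divergence is in the choice of auxiliary prime and in how much of $h$ you perturb, and that is where a genuine gap opens. The paper fixes the auxiliary prime to be $2$ \emph{for every} $q$, using the identity $T_i(x)\equiv x^i\pmod 2$: it takes an irreducible $h_2\in\mathbb{F}_2[x]$ of the shape $x^g+a_4x^{g-4}+\cdots+a_g$ (Hsu with only $s=4$ top coefficients prescribed), lifts with $a_i\in\{0,1\}$, and then adjusts only $a_{g-1},a_g$ (and $a_{g-2}$ when $3\mid q$) by small \emph{even} integers, so the mod-$2$ reduction---hence irreducibility---is untouched. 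Coprimality of $h(0)$ with $q$ is then obtained, for $\mathrm{char}(\mathbb{F}_q)\notin\{2,3\}$, by a further shift of $a_g$ by $\pm18$: this preserves both the mod-$2$ reduction and the mod-$9$ congruence, moves $h(0)$ to a different residue mod $p$, and keeps $|a_g|\le 17$. Since all adjustments are bounded independently of $q$, Lemma~11 reduces to a single numerical inequality that holds precisely for $s=4$ and $g\ge 14$; no asymptotics are invoked.

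Your version instead takes the auxiliary prime equal to $\mathrm{char}(\mathbb{F}_q)$ and lets $R$ have degree up to $g/2$ with coefficients of size $O(9p)$. Two concrete failures follow. First, when $\mathrm{char}(\mathbb{F}_q)=3$ you cannot use it (you need $\gcd(9,p)=1$), and you give no substitute for $q=3^r$. Second, your own bound $|R|\le 9p\sum_{k\le g/2}2^{1+k/2}\asymp p\cdot 2^{g/4}$ has to beat the oscillation $2^{1+g/2}$; even at $p=2$, $g=14$ the left side already exceeds the right, so the construction as written cannot reach the stated threshold, and it degrades further as $p$ grows. Falling back on the asymptotic results for ``large $q$'' does not repair this unless you show the two regimes actually overlap for every $g\ge14$, which you do not. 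The paper's fixed prime $2$ together with the $\pm18$ trick is exactly the device that eliminates this non-uniformity and makes the cutoff $g\ge14$ drop out of one explicit inequality---this is the missing idea in your outline.
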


Here, we are going to proceed in a similar way as in \cite{HOWE2002139}, by using techniques of \cite{HSU199685}. First, we consider a polynomial of the form
\begin{align} \label{eq:h_from_T}
T_g + a_1 T_{g-1} + a_2 T_{g-2} +\dots + a_{g-1} T_1 + a_g T_0, \quad a_i\in \mathbb{Z}
\end{align}
where $T_0=1$ and for every positive integer $i$, we have $T_i(x):=2^{1+i/2} C_i(x /2^{3/2})$ is a variation of the Chebyshev polynomial $C_i \in \mathbb{Z}[x]$ defined by $C_i(\cos \theta) = \cos i\theta$. 
More precisely, we want our $h$ to be of the form
\begin{align} \label{eq:h_from_T_wanted}
T_g + a_s T_{g-s}  +\dots + a_{g-1} T_1 + a_g T_0, 
\end{align}
with $a_i \in \mathbb{Z}$ to be chosen later as well as the value of $s$. This is in order to be able to apply both results \cite[Corollary 3.2]{HSU199685} and \cite[Lemma 11]{HOWE2002139} at the same time.
As we have $T_i(x)\equiv x^{i}\pmod{2}$, from \cite[Corollary 3.2]{HSU199685}\footnote{with the notations as in the reference, we take $f(t)=t^s+a_s, r=g, l=s$ and $q=2$.} there exist an irreducible polynomial $h_2\in \mathbb{F}_2[x]$ such that it is the reduction of a polynomial $\tilde{h}\in\mathbb{Z}[x]$ over the integers of the form (\ref{eq:h_from_T_wanted}). The polynomial $\tilde{h}$ can be taken with the $a_i$ in $\{0,1\}$.
Clearly, the values $\tilde{h}'(n)$ and $\tilde{h}(n)$ ($n\in\{0,1,2\}$)\footnote{the case $q=3^r$ will require more work, which we will complete after the general case.} depends on the $a_i$. It is not hard to see that they can be modified by changing $a_{g-1}$ and $a_{g}$, respectively, since
\begin{align}\label{eq:small_Chebyshev}
T_0(x)=1,\; T_1(x)=x \quad\text{and}\quad T_2(x)=x^2-4.
\end{align}

We take as $h$ the $\tilde{h}$ but with the modifications on the two last $a_i$, as follows.
According on $\tilde{h}'(n)\equiv 0,1,2 \pmod{3}$, we add to $a_{g-1}$ one of the numbers in $\{0,2,-2\}$ in order to have $3\vert \tilde{h}'(n)$. 
After that change has been made, according on the \emph{new} $\tilde{h}(n)\equiv 0,1,\dots 8 \pmod{9}$, we add to $a_g$ (in fact, we know that $a_g=1$) the respective number in $\{0,8,-2,6,-4,4,-6,2,-8\}$. At this point, we have $3\vert \tilde{h}'(n)$ and $9\vert \tilde{h}(n)$. As we have $h_2$ irreducible (so its constant term is $1$), then $2$ is coprime with $\tilde{h}(0)$, but $\tilde{h}(0)$ may have a common divisor with $q$. If this is the case, we add to $a_g$ the number $18$ or $-18$ in order to have $\mid a_g \mid\leq 17$.
By doing as previously explained,
\begin{itemize}
\item the reductions modulo $2$ of $h$ remains $h_2$ (so $h$ is irreducible), 
\item and provided that $3\nmid q$, $h(0)$ will be coprime with $q$. 
\end{itemize}
In order to apply \cite[Lemma 11]{HOWE2002139}, we verify

\begin{align}
\left\vert \frac{a_s}{2^{s/2}} \right\vert  + \dots + \left\vert \frac{a_{g-1}}{2^{(g-1)/2}} \right\vert + \frac{1}{2} \left\vert \frac{a_g}{2^{g/2}} \right\vert <& \left\vert \frac{1}{2^{s/2}} \right\vert  + \dots + \left\vert \frac{3}{2^{(g-1)/2}} \right\vert + \frac{1}{2} \left\vert \frac{17}{2^{g/2}} \right\vert
\end{align}
which is $<1$ for $s=4$ and $g\geq 14$. Finally, both results \cite[Corollary 3.2]{HSU199685} and \cite[Lemma 11]{HOWE2002139} can be applied.

If $3\mid q$, the constant term $h(0)$ can be a multiple of $3$ and after summing $18$ or $-18$ it will still be a multiple of $3$. Remember that in this case we want $3\vert \tilde{h}'(1)$ and $9\vert \tilde{h}(1)$. As before, $\tilde{h}'(1)$ can be changed without any problem by changing $a_{g-1}$. According to the situation on $h(1)$ modulo $9$ and $h(0)$ modulo $3$, Table \ref{tab:case_q_3r} shows how to modify $a_g$ and $a_{g-2}$, respectively. This is done considering that $h(0)$ changes by the change of $a_{g} - 4a_{g-2}$ and $h(1)$ changes by the change of $a_{g} - 3a_{g-2}$, which follows from expressions in (\ref{eq:small_Chebyshev}).
At this point, we have $3\vert \tilde{h}'(1)$ and $9\vert \tilde{h}(1)$, and $3\nmid \tilde{h}(0)$. As before $2$ is also coprime with $\tilde{h}(0)$, but  $\tilde{h}(0)$ may have a common divisor with $q$. If this is the case, we add $18$ or $-18$ in order to have $\mid a_g \mid\leq 17$. By doing as previously explained,
\begin{itemize}
\item the reductions modulo $2$ of $h$ remains $h_2$ (so $h$ is irreducible), 
\item and $h(0)$ will be coprime with $q$. 
\end{itemize}

\begin{table}[ht]
    \centering
    \begin{tabular}{c|c|c|c|c|c|c|c|c|c}
         & 0 & 1 & 2 & 3 & 4 & 5 & 6 & 7 & 8  \\ \hline \hline
$0$ & $[ 6, 2 ]$ & $[ 8, 0 ]$ & $[ -2, 0 ]$ & $[ -6, 2 ]$ & $[ -4, 0 ]$ & $[ 4, 0 ]$ & $[ 0, 2 ]$ & $[ 2, 0 ]$ & $[ -8, 0 ]$ \\ \hline
$1$ & $[ 0, 0 ]$ & $[ -4, 2 ]$ & $[ -2, 0 ]$ & $[ 6, 0 ]$ & $[ 2, 2 ]$ & $[ 4, 0 ]$ & $[ -6, 0 ]$ & $[ 8, 2 ]$ & $[ -8, 0 ]$ \\ \hline
$2$ & $[ 0, 0 ]$ & $[ 8, 0 ]$ & $[ 4, 2 ]$ & $[ 6, 0 ]$ & $[ -4, 0 ]$ & $[ -8, 2 ]$ & $[ -6, 0 ]$ & $[ 2, 0 ]$ & $[ -2, 2 ]$ \\
    \end{tabular}
    \caption{How to modify the pair $[a_g, a_{g-2}]$ in the case $q=3^r$.}
    \label{tab:case_q_3r}
\end{table}

In order to apply \cite[Lemma 11]{HOWE2002139}, we verify
{\footnotesize
\begin{align}
\left\vert \frac{a_s}{2^{s/2}} \right\vert  + \dots + \left\vert \frac{a_{g-2}}{2^{(g-2)/2}} \right\vert + \left\vert \frac{a_{g-1}}{2^{(g-1)/2}} \right\vert + \frac{1}{2} \left\vert \frac{a_g}{2^{g/2}} \right\vert <& \left\vert \frac{1}{2^{s/2}} \right\vert  + \dots + \left\vert \frac{3}{2^{(g-2)/2}} \right\vert + \left\vert \frac{3}{2^{(g-1)/2}} \right\vert + \frac{1}{2} \left\vert \frac{17}{2^{g/2}} \right\vert
\end{align}
}
which is $<1$ for $s=4$ and $g\geq 14$. Finally, both results \cite[Corollary 3.2]{HSU199685} and \cite[Lemma 11]{HOWE2002139} can be applied, as in the previous case.
This completes the proof of Proposition \ref{prop:g_ge_14}.

\bibliography{ajgiangreco_EN-CAVFF}
\bibliographystyle{siam}

\end{document}